\author{Emerson de Melo}
\address{Department of Mathematics, University of Bras\'ilia, Bras\'ilia-DF 70910-900, Brazil}
\email{emerson@mat.unb.br}
\keywords{$p$-groups, Automorphisms, Nilpotent residual, Fitting subgroup}
\subjclass{20D45}
\title{Nilpotent residual and Fitting subgroup of fixed points in finite groups}
\newtheorem{theorem}{\sc Theorem}[section]
\newtheorem{lemma}[theorem]{\sc Lemma}
\begin{document}


\begin{abstract}
Let $q$ be a prime and $A$ a finite $q$-group of exponent $q$ acting by automorphisms on a finite $q'$-group $G$. Assume that $A$ has order at least $q^3$. We show that if $\gamma_{\infty} (C_{G}(a))$ has order at most $m$ for any $a \in A^{\#}$, then the order of $\gamma_{\infty} (G)$ is bounded solely in terms of $m$. If the Fitting subgroup of $C_{G}(a)$ has index at most $m$ for any $a \in A^{\#}$, then the second Fitting subgroup of $G$ has index bounded solely in terms of $m$. 
\end{abstract}

\maketitle

\section{Introduction}

Suppose that a finite group $A$ acts by automorphisms on a finite group $G$.  The action is coprime if the groups $A$ and $G$ have coprime orders. We denote by $C_G(A)$ the set $$\{g\in G\ |\ g^a=g \ \textrm{for all} \ a\in A\},$$ the centralizer of $A$ in $G$ (the fixed-point subgroup). In what follows we denote by $A^\#$ the set of nontrivial elements of $A$. It has been known that centralizers of coprime automorphisms have strong influence on the structure of $G$.

Ward showed that if $A$ is an elementary abelian $q$-group of rank at least 3 and if $C_G(a)$ is nilpotent for any $a\in A^\#$, then the group $G$ is nilpotent \cite{War}. Later Shumyatsky showed that if, under these hypotheses, $C_G(a)$ is nilpotent of class at most $c$ for any $a\in A^\#$, then the group $G$ is nilpotent with $(c,q)$-bounded nilpotency class \cite{Sh}. Throughout the paper we use the expression ``$(a,b,\dots )$-bounded'' to abbreviate ``bounded from above in terms of  $a,b,\dots$ only''. In the recent article \cite{Eme1} the above result was extended to the case where $A$ is not necessarily abelian. Namely, it was shown that if $A$ is a finite group of prime exponent $q$ and order at least $q^3$ acting on a finite $q'$-group $G$ in such a manner that $C_G(a)$ is nilpotent of class at most $c$ for any $a\in A^{\#}$, then $G$ is nilpotent with class bounded solely in terms of $c$ and $q$. Many other results illustrating the influence of centralizers of automorphisms on the structure of $G$ can be found in \cite{khukhro}.

Recall that the nilpotent residual $\gamma_\infty(K)$ of a group $K$ is the last term of the lower central series of $K$. It can also be defined as the intersection of all normal subgroups of $K$ whose quotients are nilpotent. Also, recall that the second Fitting subgroup $F_2(G)$ of a finite group $G$ is defined as the inverse image of $F(G/F(G))$, that is, $F_2(G)/F(G)=F(G/F(G))$. Here $F(G)$ stands for the Fitting subgroup of $G$. 

Recently, in \cite{Eme11} it was proved that if $A$ is an elementary abelian group of order at least $q^3$ acting by automorphisms on a finite $q'$-group $G$ and if $|\gamma_{\infty}(C_{G}(a))|\leq m$ for any $a\in A^{\#}$, then the order of  $\gamma_{\infty}(G)$ is $m$-bounded. If $F(C_{G}(a))$ has index at most $m$ in $C_G(a)$ for any $a \in A^{\#}$, then the index of $F_2(G)$ is $m$-bounded.

In the present article we extend the results obtained in \cite{Eme11} to the case where $A$ is not necessarily abelian.  

We obtain the following results.

\begin{theorem}\label{main1}
Let $q$ be a prime and $m$ a positive integer. Let $A$ be a finite $q$-group of exponent $q$ acting by automorphisms on a finite $q'$-group $G$. Assume that $A$ has order at least $q^3$ and $|\gamma_{\infty} (C_{G}(a))| \leq m$ for any $a\in A^{\#}$. Then $|\gamma_{\infty}(G)|$ is $m$-bounded.   
\end{theorem}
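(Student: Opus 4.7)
The plan is to build on the elementary abelian case established in \cite{Eme11}, reducing to it whenever possible and handling separately the residual situation where $A$ has no elementary abelian subgroup of rank at least $3$.

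First, I would bound the Fitting height of $G$ in terms of $m$. The hypothesis $|\gamma_\infty(C_G(a))|\le m$ implies that $C_G(a)$ has a nilpotent quotient by a subgroup of order at most $m$, so the Fitting height of $C_G(a)$ is $m$-bounded. Coupled with the coprime action of the $q$-group $A$ on the $q'$-group $G$ and the non-abelian exponent-$q$ machinery used in \cite{Eme1}, this should bound the Fitting height of $G$. An induction on the Fitting height then reduces the problem: once $G$ is nilpotent, $\gamma_\infty(G)=1$; otherwise one passes to $G/F(G)$ and combines the inductive bound on $|\gamma_\infty(G/F(G))|$ with an $A$-module-theoretic bound on $|\gamma_\infty(G)\cap F(G)|$ coming from the action of $A$ on chief factors inside $F(G)$.

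Next, I would dispose of the easier case: if $A$ contains an elementary abelian subgroup $B$ of order at least $q^3$, then $B$ acting on $G$ satisfies the hypotheses of the main theorem of \cite{Eme11}, and the required $m$-bound on $|\gamma_\infty(G)|$ follows at once. This already covers $q=2$, because exponent $q=2$ forces $A$ itself to be elementary abelian. For odd $q$ the remaining case is that every elementary abelian subgroup of $A$ has rank at most $2$; the basic instance is $A$ extraspecial of order $q^3$ and exponent $q$ (the Heisenberg $q$-group), whose maximal subgroups are elementary abelian of rank exactly $2$.

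The main obstacle is this Heisenberg-type case, where \cite{Eme11} does not apply to any abelian section of $A$ of sufficiently large rank. For each maximal subgroup $B\le A$, the hypothesis provides $|\gamma_\infty(C_G(a))|\le m$ for every $a\in B^\#$, together with an extra coprime action of the group $A/B$ of order $q$ on $C_G(B)$. I would combine the facts that $A=\langle B_1,B_2\rangle$ for any two distinct maximal subgroups $B_1,B_2$, and that the hypothesis still controls every $a\in A\setminus (B_1\cup B_2)$, with the commutator and centralizer arguments from \cite{Eme1}, in order to upgrade the rank-$2$ information on each $C_G(B)$ to an $m$-bound on $|\gamma_\infty(G)|$ that is uniform in $q$, as demanded by the statement.
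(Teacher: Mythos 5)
Your reduction to the extraspecial case and your treatment of the case where $A$ contains an abelian subgroup of order $q^3$ (via \cite{Eme11}, and the observation that $q=2$ forces $A$ abelian) agree with the paper, but the heart of the proof --- the extraspecial case --- is missing. The first concrete gap is that you never explain how to obtain a bound uniform in $q$. The paper's mechanism is a dichotomy: since $|\gamma_\infty(C_G(a))|\le m$ forces $F(C_G(a))$ to have index at most $m!$, one first invokes \cite[Theorem 1.1]{Eme2} to get a $(q,m)$-bound on $|\gamma_\infty(G)|$, which disposes of all primes $q\le m!$; for $q>m!$ the hypothesis then forces $B=Z(A)$ to act trivially on every relevant section of order less than $q$, and this triviality is what drives the remainder of the argument. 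Your final paragraph, which promises to ``upgrade the rank-$2$ information \dots\ uniformly in $q$'' by combining generation facts with the methods of \cite{Eme1}, is a statement of intent rather than an argument; \cite{Eme1} bounds nilpotency class under a nilpotency hypothesis on all $C_G(a)$ and does not by itself produce order bounds on residuals.

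The second gap is the inductive step itself. The ``$A$-module-theoretic bound on $|\gamma_\infty(G)\cap F(G)|$'' is precisely what needs to be proved, and no mechanism is offered. The paper instead establishes, for $q>m!$, that $[G,B]$ is nilpotent (Lemma \ref{nilp}, whose proof needs Clifford theory, a regular-orbit analysis of Wedderburn components, and Thompson's theorem on fixed-point-free automorphisms), and then that $\gamma_\infty(G)=\gamma_\infty(C_G(B))$ for soluble $G$ (Lemma \ref{gam}), which bounds $|\gamma_\infty(R)|$ by $m$ for the soluble radical $R$. Note also that $G$ need not be soluble, so an induction on Fitting height does not get off the ground in general; the paper handles the nonsoluble part by showing $R$ has index at most $m!$, reducing to $G/R$ simple of order at most $m$, and then applying Schur--Zassenhaus, generation of a simple group by two Sylow $s$-subgroups (Lemma \ref{uu}), and Schur's theorem on $|G'|$ when $G/Z(G)$ is bounded. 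None of these ingredients appears in your proposal, so as written it does not constitute a proof.
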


\begin{theorem}\label{main2}
Let $q$ be a prime and $m$ a positive integer. Let $A$ be a finite $q$-group of exponent $q$ acting by automorphisms on a finite $q'$-group $G$. Assume that $A$ has order at least $q^3$ and $F(C_{G}(a))$ has index at most $m$ in $C_G(a)$ for any $a \in A^{\#}$. Then the index of $F_2(G)$ is $m$-bounded.   
\end{theorem}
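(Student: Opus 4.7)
The strategy is to reduce the theorem to the abelian case from \cite{Eme11}, following the scheme developed in \cite{Eme1} for the analogous nilpotency-class result. Since each $C_G(a)$ contains a nilpotent subgroup of index at most $m$ it is solvable, and since $A$ is non-cyclic (being a $q$-group of exponent $q$ with $|A| \geq q^3$), a standard coprime-action solvability criterion forces $G$ to be solvable. Passing to $\bar G := G/F(G)$, on which $A$ still acts coprimely, we have $C_{\bar G}(a) = \overline{C_G(a)}$ by the coprime centralizer lemma. The image of $F(C_G(a))$ is a nilpotent subgroup of $C_{\bar G}(a)$ of index at most $m$; its normal core is a normal nilpotent subgroup of $C_{\bar G}(a)$ of index at most $m!$, hence lies in $F(C_{\bar G}(a))$, so $|C_{\bar G}(a) : F(C_{\bar G}(a))|$ is $m$-bounded. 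The goal becomes a bound on $|\bar G : F(\bar G)| = |G : F_2(G)|$.

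Next, I would split on the structure of $A$. If $A$ has an elementary abelian subgroup $E$ of rank at least $3$, then $E$ inherits the centralizer hypothesis and the abelian result of \cite{Eme11}, applied to $E$ acting on $G$, yields the bound at once. Otherwise $A$ has no elementary abelian subgroup of rank $\geq 3$; a short argument then shows $Z(A)$ is cyclic of order $q$, and one can fix a normal elementary abelian subgroup $B \triangleleft A$ of rank $2$ containing $Z(A)$ by taking the preimage in $A$ of a cyclic central subgroup of order $q$ of $A/Z(A)$. The coprime Thompson--Ward lemma gives $G = \langle C_G(b) : b \in B^{\#}\rangle$, and every $C_G(b)$ has Fitting subgroup of $m$-bounded index.

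To compensate for the missing rank I would use the action of $A/B$, which is nontrivial since $|A| \geq q^3$: for any $a \in A \setminus B$, the subgroup $\langle B, a\rangle$ has order at least $q^3$, and pairing the hypothesis on $C_G(a)$ with the induced action of $a$ on each $F(C_G(b))$ should recover a rank-$3$-type decomposition sufficient to invoke (a slight adaptation of) the arguments from \cite{Eme11}. One may also pass to the $A$-invariant subgroup $C_G(Z(A))$ on which $A/Z(A)$ (again a $q$-group of exponent $q$ with order $\geq q^2$) acts, and iterate the scheme inductively on $|A|$ to assemble the global bound.

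The main obstacle is this last step: the abelian theorem fundamentally exploits rank at least $3$ through the interaction of three independent centralizers, whereas our $B$ provides only rank $2$. The non-abelian action of $A/B$ must fill the gap, and the technical heart is showing that the interaction between $C_G(a)$ and the subgroups $F(C_G(b))$ for $b \in B^{\#}$ is controlled tightly enough by $m$ alone to yield an $m$-bounded index $|G : F_2(G)|$ globally, rather than merely inside the $A$-invariant subgroup $C_G(B)$.
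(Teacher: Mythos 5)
Your opening reduction contains an error: a group with a nilpotent subgroup of index at most $m$ need not be soluble (for $m\geq 12$ the alternating group on five letters already has a nilpotent subgroup of index $12$), so you cannot conclude that the subgroups $C_G(a)$, let alone $G$ itself, are soluble. The paper never assumes $G$ soluble; it only needs $[G,Z(A)]$ soluble, which it obtains from Theorem \ref{solu} after reducing to the case $q>m$. That reduction is a second missing ingredient in your plan: when $q\leq m$ the paper proves the bound directly (Lemma \ref{F2}) by showing that $\bigcap_{a\in A_i^{\#}}F(C_G(a))$ lies in $F(G)$ for every subgroup $A_i$ of order $q^2$ (Lemma \ref{P2}), deducing that $C_{G/F(G)}(Z(A))$ has $(q,m)$-bounded order, and applying Khukhro's Theorem \ref{afx} to the single automorphism group $Z(A)$ acting on $G/F(G)$; only in the remaining case $q>m$ does one obtain the crucial containment $[C_G(a),Z(A)]\leq F(C_G(a))$. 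Your proposal does not address how the prime $q$ is eliminated from the bound.

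More importantly, you stop exactly where the real work begins, and you say so yourself. After the reduction to $A$ extra-special of order $q^3$, the paper's pivot is $B=Z(A)$, a \emph{cyclic} group of order $q$ (not a rank-two normal subgroup as in your plan), and the technical heart is Lemma \ref{nilp}: $[G,B]$ is nilpotent. This is proved by taking a minimal counterexample $G=VH$ with $V$ an elementary abelian minimal $A$-invariant normal subgroup, showing $C_V(B)=1$, extending scalars and applying Clifford's theorem to split an irreducible factor into Wedderburn components permuted by $A$, proving that $H$ acts trivially on the sum of components in any regular $\langle a\rangle$-orbit (because $E_a=[C_H(a),B]$ centralizes $C_V(a)$ and $H$ is generated by the $E_x$ with $x\in\langle a,B\rangle\setminus B$), and disposing of the fixed-component case via the scalar action of $Z(H)$ together with Thompson's theorem on fixed-point-free automorphisms. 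Once $[G,B]$ is nilpotent one gets $G=F(G)C_G(B)$, hence $F(C_G(B))\leq F_2(G)$, and the hypothesis applied to $C_G(B)$ finishes the proof. None of this machinery appears in your proposal; the ``rank-$3$-type decomposition'' and the induction on $|A|$ you gesture at are not substitutes for it, so the argument as written has a genuine gap at its central step.
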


In the next section we give some helpful lemmas that will be used in the proofs of the above results. Section 3 deals with the proof of Theorem \ref{main2}. In Section 4 we prove Theorem \ref{main1}.

\section{Preliminaries}
If $A$ is a group of automorphisms of a group $G$, the subgroup generated by elements of the form $g^{-1}g^\alpha$ with $g\in G$ and $\alpha\in A$ is denoted by $[G,A]$. The subgroup $[G,A]$ is an $A$-invariant normal subgroup in $G$. Our first lemma is a collection of well-known facts on coprime actions (see for example \cite{GO}). Throughout the paper we will use it without explicit references.
\begin{lemma}\label{111} Let $A$ be a group of automorphisms of a finite group $G$ such that $(|G|,|A|)=1$. Then
\begin{enumerate}
\item[i)] $G=[G,A]C_{G}(A)$.
\item[ii)] $[G,A,A]=[G,A]$.
\item[iii)] $A$ leaves invariant some Sylow $p$-subgroup of $G$ for each prime $p\in\pi(G)$.
\item[iv)] $C_{G/N}(A)=C_G(A)N/N$ for any $A$-invariant normal subgroup $N$ of $G$.
\item[v)] If $A$ is a noncyclic elementary abelian group and $A_1,\dots,A_s$ are the maximal subgroups in $A$, then $G=\langle C_G(A_1),\ldots,C_G(A_s)\rangle$. Furthermore, if $G$ is nilpotent, then $G=\prod_i C_G(A_i)$.
\end{enumerate}
\end{lemma}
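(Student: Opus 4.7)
The plan is to prove the five parts in an order that lets earlier items feed the later ones, with Schur--Zassenhaus--style reasoning providing the coprime glue throughout.

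I would begin with (iv), the lifting of fixed points modulo an $A$-invariant normal subgroup $N$. Given $gN\in C_{G/N}(A)$, the coset $gN$ is $A$-invariant and $N$ acts on it regularly; since $\gcd(|A|,|N|) = 1$, an averaging/$1$-cocycle argument (equivalently, Schur--Zassenhaus inside $N\rtimes A$) produces a fixed point $h\in gN \cap C_G(A)$, whence $gN\in C_G(A)N/N$. Parts (i) and (ii) drop out of (iv): for (i), $A$ centralizes $G/[G,A]$ by the definition of $[G,A]$, so (iv) with $N=[G,A]$ gives $G = C_G(A)[G,A]$; for (ii), combining (i) with the identity $[xy,a] = [x,a]^y[y,a]$ and $[C_G(A),A] = 1$ gives $[G,A] = [C_G(A)\cdot[G,A], A] \subseteq [[G,A],A]$, and the reverse inclusion is automatic.

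For (iii), I would pass to the semidirect product $\Gamma = G \rtimes A$. A Sylow $p$-subgroup $P$ of $G$ is also Sylow in $\Gamma$ because $p\nmid |A|$; the Frattini argument gives $\Gamma = G\cdot N_\Gamma(P)$, and Schur--Zassenhaus inside $N_\Gamma(P)$ (applicable because at least one of $|A|,|G|$ is odd and hence solvable by Feit--Thompson) produces a $G$-conjugate of $A$ normalizing $P$; conjugating $P$ back yields an $A$-invariant Sylow.

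Part (v) is where I expect the main work to lie. For the join assertion, I would induct on $|G|$: pick a minimal $A$-invariant normal subgroup $N$, lift from $G/N$ via (iv), and reduce the base case to a Sylow via (iii). The product formula in the nilpotent case is the main obstacle. Here I would use (iii) to decompose $G$ as an $A$-invariant product of its Sylow subgroups, reducing to $G$ a $p$-group with $p\neq q$. Applying a Fitting-style $A$-eigenspace decomposition to the successive quotients $\gamma_i(G)/\gamma_{i+1}(G)$ (after extending scalars beyond $\mathbb{F}_p$), each nontrivial character of $A$ has kernel contained in some maximal subgroup $A_i$, so its eigenspace lies in $C_G(A_i)$, while the trivial eigenspace lies in every $C_G(A_i)$. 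The delicate bookkeeping is showing that these pieces \emph{multiply}---in any order and without gaps---to cover $G$ rather than merely generating it; this uses both the nilpotency of $G$ (to induct on class) and the commutation of distinct eigenspaces modulo lower central terms.
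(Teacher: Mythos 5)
The paper offers no proof of this lemma at all: it is introduced as ``a collection of well-known facts on coprime actions'' with a pointer to Gorenstein's book \cite{GO}, so there is no in-paper argument to compare against. Your sketch follows the standard textbook route, and parts (i)--(iv) are correct and essentially complete: (iv) by Schur--Zassenhaus conjugacy inside $NA$ (note that the ``averaging'' version only works for $N$ abelian, and that the conjugacy part of Schur--Zassenhaus needs solvability of one of the two coprime factors, supplied by Feit--Thompson); (i) and (ii) as formal consequences of (iv) and the identity $[xy,a]=[x,a]^y[y,a]$; (iii) by the Frattini argument in $G\rtimes A$.

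Part (v) is where the genuine gaps sit. First, your induction for the generation statement stalls at its own base case: if $G$ has no proper nontrivial $A$-invariant normal subgroup and is a nonabelian characteristically simple group, ``reduce to a Sylow'' does not apply, since a single Sylow subgroup does not generate $G$. The standard fix, which also makes the induction on normal subgroups unnecessary, is to take one $A$-invariant Sylow $p$-subgroup $P_p$ for \emph{each} $p\in\pi(G)$ (these exist by (iii) and together generate $G$), prove $P_p=\langle C_{P_p}(A_i)\rangle$ by your eigenspace argument on $P_p/\Phi(P_p)$, and conclude $G=\langle C_G(A_i)\rangle$. Second, and more seriously, the product formula for nilpotent $G$ is asserted but not proved: the character decomposition of $G/\Phi(G)$ only yields $G=\langle C_G(A_i)\rangle\Phi(G)$, hence generation, and the statement that the sets $C_G(A_i)$ \emph{multiply} to all of $G$ is precisely the content of the claim, not bookkeeping --- note that $\prod_i C_G(A_i)$ is a set product, so one cannot even cancel $\Phi(G)$ against it. This is the part of the lemma the paper actually leans on (for instance $H=\prod_{b\in A_i^{\#}}C_H(b)$ in the proof of Lemma \ref{gam}), so it cannot be waved through. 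The standard argument (Gorenstein, Theorem 6.2.4, or \cite[Lemma 2.6]{khukhro}-type statements) runs by induction on $|G|$ for $G$ a $p$-group, using the coprime decomposition $G=C_G(A_i)\times[G,A_i]$ in the abelian case and the $A$-invariance of the subgroups $[G,A_i]$ to organize the induction; as written, your proposal records the desired conclusion for this part rather than proving it.
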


Let $\varphi$ be an automorphism of prime order $q$ of a finite group $G$ and assume that $|C_G(\varphi)|=m$.  In \cite{Fong} Fong proved using the classification of finite simple groups that $G$ has a soluble subgroup of $(q,m)$-bounded index. Later Hartley and Meixner in \cite{Har} proved that if $G$ is soluble, then $G$ contains a nilpotent subgroup of $(q,m)$-bounded index. And lastly, it was shown by Khukhro that if $G$ is nilpotent, then $G$ has a subgroup of $(q,m)$-bounded index which is nilpotent of $q$-bounded class (see, for example, \cite[Theorem 5.3.1]{khukhro}). Combining this results we get the following theorem.  

\begin{theorem}\cite[Corollary 5.4.1]{khukhro}\label{afx}
If $\varphi$ is an automorphism of prime order $q$ of a finite group $G$ and $|C_G(\varphi)|=m$, then $G$ contains a nilpotent subgroup of $(q,m)$-bounded index whose nilpotency class is $q$-bounded. 
\end{theorem}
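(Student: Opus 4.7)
The theorem is a cited combination of three deep results mentioned just above its statement: Fong's theorem (giving a soluble subgroup of bounded index), Hartley--Meixner (passing from soluble to nilpotent), and Khukhro's bound on the nilpotency class in the nilpotent case. My plan is simply to apply them in this order, the only technical care being that at each stage the subgroup produced must be $\varphi$-invariant, so that the hypothesis $|C(\varphi)|\leq m$ continues to apply and we may feed the next ingredient into the chain.

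The general device for $\varphi$-invariance is: whenever we obtain a subgroup $H\leq G$ of $(q,m)$-bounded index $n$, we replace it by $H^\ast=\bigcap_{i=0}^{q-1}\varphi^{i}(H)$, which is $\varphi$-invariant, still has the same structural property (soluble, respectively nilpotent), and whose index in the ambient group is at most $n^{q}$, hence still $(q,m)$-bounded. Moreover, if $H$ is $\varphi$-invariant then $C_{H}(\varphi)=C_{G}(\varphi)\cap H$ has order at most $m$, so the fixed-point hypothesis is inherited.

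With this in hand the argument runs in three steps. First apply Fong's theorem to $G$ and pass to a $\varphi$-invariant soluble subgroup $S\leq G$ of $(q,m)$-bounded index. Next apply Hartley--Meixner to the $\varphi$-action on $S$ and again close up under $\varphi$ to obtain a $\varphi$-invariant nilpotent subgroup $N\leq S$ of $(q,m)$-bounded index in $S$, hence in $G$. Finally, to $N$ with the induced automorphism of order dividing $q$, apply Khukhro's theorem on nilpotent groups to extract a subgroup $N_{0}\leq N$ of $(q,m)$-bounded index which is nilpotent of $q$-bounded class. Since each of the three indices $[G:S]$, $[S:N]$, $[N:N_{0}]$ is $(q,m)$-bounded, the total index $[G:N_{0}]$ is also $(q,m)$-bounded, proving the theorem.

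The real obstacle is not in this assembly but in the three imported ingredients themselves: Fong's theorem relies on the classification of finite simple groups, and Khukhro's bound on the nilpotency class uses delicate Lie-ring techniques applied to an associated graded Lie algebra with an automorphism of prime order. The combinatorial bookkeeping above is routine once the $\varphi$-invariance trick is in place, and for our purposes we use the theorem as a black box.
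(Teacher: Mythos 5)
Your proposal is correct and follows exactly the route the paper indicates: the paper states this result as a citation of \cite[Corollary 5.4.1]{khukhro} and, in the preceding paragraph, describes precisely the same chain Fong $\to$ Hartley--Meixner $\to$ Khukhro that you assemble. Your explicit handling of $\varphi$-invariance via $H^{\ast}=\bigcap_{i}\varphi^{i}(H)$ is the standard bookkeeping the paper leaves implicit, and it is carried out correctly.
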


The following lemma is an application of the three subgroup lemma.
\begin{lemma}\label{lz}
Let  $A$ be a group of automorphisms of a finite group $G$ and let $N$ be a normal subgroup of $G$ contained in $C_G(A)$. Then $[[G,A],N]=1$. In particular, if $G=[G,A]$, then $N\leq Z(G)$.
\end{lemma}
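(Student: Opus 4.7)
The plan is to invoke the three subgroup lemma inside the semidirect product $H = G \rtimes A$, applied to the triple of subgroups $G$, $A$, $N$. Recall the three subgroup lemma: if $[X,Y,Z]=1$ and $[Y,Z,X]=1$, then $[Z,X,Y]=1$.

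First I would verify the two hypotheses. Since $N \leq C_G(A)$, we have $[N,A]=1$, so trivially $[A,N,G]=1$. For the second commutator, I would use that $N$ is normal in $G$: this gives $[N,G]\leq N$, and then applying $[N,A]=1$ once more yields $[[N,G],A]\leq [N,A]=1$, i.e.\ $[N,G,A]=1$.

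With $X=A$, $Y=N$, $Z=G$, the three subgroup lemma then produces $[G,A,N]=[[G,A],N]=1$, which is the first claim. For the ``in particular'' statement, if $G=[G,A]$, then $[G,N]=[[G,A],N]=1$, so $N$ centralizes $G$, i.e.\ $N\leq Z(G)$.

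There is no real obstacle here: the proof is essentially a bookkeeping exercise to check the two hypotheses of the three subgroup lemma, and the only minor subtlety is remembering to use the normality of $N$ in $G$ to collapse $[N,G,A]$ via $[N,G]\leq N\leq C_G(A)$.
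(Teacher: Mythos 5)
Your proof is correct and follows exactly the paper's argument: verify $[A,N,G]=1$ and $[N,G,A]=1$ (the latter via normality of $N$ and $N\leq C_G(A)$), then apply the three subgroup lemma to conclude $[G,A,N]=1$. You have simply spelled out the two hypothesis checks that the paper states without elaboration.
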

\begin{proof} Indeed, by the hypotheses, $[N,G,A]=[A,N,G]=1$. Thus, $[G,A,N]=1$ and the lemma follows.
\end{proof}

We finish this section with some results about elementary abelian groups acting as automorphisms. 

\begin{lemma}\cite[Lemma 2.6]{Sh}\label{P2}
Let $q$ be a prime and $A$ be an elementary abelian group of order $q^2$ acting on a finite $q'$-group $G$. Let $C=\cap_{a\in A^{\#}}F(C_G(a))$. Then $C\leq F(G)$.
\end{lemma}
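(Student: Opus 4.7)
The plan is to establish structural properties of $C$, prove a centralisation lemma relating the Sylow subgroups of $C$ to the Fitting subgroup of $G$, and then conclude by induction on $|G|$.

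Since $A$ is abelian, each $C_G(a)$ is $A$-invariant, hence so is the characteristic subgroup $F(C_G(a))$; thus $C$ is $A$-invariant and, as an intersection of nilpotent subgroups, nilpotent. Writing $A_1, \dots, A_{q+1}$ for the order-$q$ subgroups of $A$, any $a \in A_i^{\#}$ generates $A_i$, so $C_G(a) = C_G(A_i)$ and $C = \bigcap_i F(C_G(A_i)) \le \bigcap_i C_G(A_i) = C_G(A)$.

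Next, for each prime $p$, let $C_p$ denote the (characteristic, hence $A$-invariant) Sylow $p$-subgroup of $C$; I would show that $[C_p, O_r(G)] = 1$ for every prime $r \neq p$. Setting $R = O_r(G)$, Lemma~\ref{111}(v) gives $R = \prod_i C_R(A_i)$ since $R$ is nilpotent and $A$-invariant. Each $C_R(A_i) = R \cap C_G(A_i)$ is a normal $r$-subgroup of $C_G(A_i)$, hence sits in $O_r(C_G(A_i)) \le F(C_G(A_i))$. As $C_p$ and $C_R(A_i)$ are subgroups of coprime orders inside the nilpotent group $F(C_G(A_i))$, they commute; summing over $i$ yields $[C_p, R] = 1$.

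Finally, I would induct on $|G|$. For any $A$-invariant normal subgroup $N$ of $G$, Lemma~\ref{111}(iv) together with the observation that $F(C_G(a))N \unlhd C_G(a)N$ (because $F(C_G(a))$ is characteristic in $C_G(a)$) shows $F(C_G(a))N/N \le F(C_{G/N}(a))$, so the induction hypothesis applied to $G/N$ gives $CN/N \le F(G/N)$. A minimal counterexample then admits a unique minimal $A$-invariant normal subgroup $N$, necessarily an elementary abelian $p$-group, and the centralisation step gives $[C_{p'}, N] = 1$. Combined with $CN/N \le F(G/N)$ and the inclusion $C_H(F(H)) \le F(H)$ valid for solvable sections $H$, this should force $C \leq F(G)$. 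The main obstacle is the Sylow $p$-part: the centralisation lemma produces only $C_p \le C_G(O_{p'}(F(G)))$, not $C_p \le C_G(F(G))$, so one must additionally exploit the local inclusions $C_p \le O_p(C_G(A_i))$ (which follow because $C_p$ is a $p$-subgroup of the nilpotent group $F(C_G(A_i))$), the $A$-invariance of $C_p$, and the decomposition $O_p(G) = \prod_i C_{O_p(G)}(A_i)$ from Lemma~\ref{111}(v), in order to glue the local data into the global inclusion $C_p \le O_p(G)$.
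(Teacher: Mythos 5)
The paper does not actually prove this lemma --- it is quoted verbatim from \cite[Lemma 2.6]{Sh} --- so there is no in-paper argument to compare against; I am judging your proposal on its own. The preliminary observations are fine, and your centralisation step is correct and genuinely useful: since $O_r(G)=\prod_i C_{O_r(G)}(A_i)$ and each $C_{O_r(G)}(A_i)$ is a normal $r$-subgroup of $C_G(A_i)$, hence lies in $F(C_G(A_i))$ alongside the $p$-group $C_p$, one does get $[C_p,O_r(G)]=1$ for $r\neq p$. The induction set-up (passing to $G/N$ via $F(C_G(a))N/N\le F(C_{G/N}(a))$, and disposing of the case of two distinct minimal $A$-invariant normal subgroups by embedding into $G/N_1\times G/N_2$) is also sound.

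There are, however, two gaps, one of them serious. First, the claim that the unique minimal $A$-invariant normal subgroup $N$ of a minimal counterexample is ``necessarily an elementary abelian $p$-group'' is unjustified: $G$ is an arbitrary finite $q'$-group (and the lemma is applied in that generality in Lemma~\ref{F2}), so $N$ may be a direct product of nonabelian simple groups. In that case $F(G)=1$, every $O_r(G)$ is trivial, your centralisation lemma yields nothing, and the assertion to be proved becomes $C=1$; nothing in your outline addresses this, and it is precisely where the content of the lemma lies for non-soluble $G$ (centralizers of coprime automorphisms in such groups routinely have large Fitting subgroups, so $C=1$ requires an argument). Second, you explicitly leave the Sylow $p$-part as an ``obstacle'' and the gluing you sketch does not work as stated: knowing $C_p\le O_p(C_G(A_i))$ for each $i$ together with $O_p(G)=\prod_i C_{O_p(G)}(A_i)$ does not give $C_p\le O_p(G)$, because the local subgroups $O_p(C_G(A_i))$ are in general much larger than $O_p(G)\cap C_G(A_i)$. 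This second gap is repairable inside your own framework: when $N$ is an elementary abelian $p$-group, $C_pN/N$ is a $p$-subgroup of $F(G/N)$, hence lies in $O_p(G/N)$, whose full preimage is a normal $p$-subgroup of $G$ and therefore sits in $O_p(G)$; a similar argument with $C_M(N)$ handles the $p'$-part. The first gap is not repaired by anything you wrote, so the proof is incomplete as it stands.
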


\begin{lemma}\cite[Lemma 3.1]{ASD}\label{P1}
Let $q$ be a prime and $m$ a positive integer. Let $A$ be an elementary abelian group of order $q^2$ acting on a finite $q'$-group $G$ in such a manner that $|C_G(a)|\leq m$ for any $a\in A^{\#}$. Then the order of $G$ is $m$-bounded.
\end{lemma}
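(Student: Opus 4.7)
The plan is to bound the order of each Sylow subgroup of $G$ separately, and then to use that same bound to cap the number of primes that can divide $|G|$.

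Since $A$ is elementary abelian of order $q^{2}$, it has exactly $q+1$ subgroups of order $q$, say $A_{1},\dots,A_{q+1}$, each generated by some $a_{i}\in A^{\#}$. Fix a prime $p$ dividing $|G|$; necessarily $p\ne q$. By Lemma~\ref{111}(iii), $A$ leaves invariant some Sylow $p$-subgroup $P$ of $G$. Since $P$ is nilpotent and $A$ is noncyclic elementary abelian, Lemma~\ref{111}(v) gives
\[
P=\prod_{i=1}^{q+1}C_{P}(A_{i}).
\]
Each factor satisfies $C_{P}(A_{i})=C_{P}(a_{i})\le C_{G}(a_{i})$, and $|C_{G}(a_{i})|\le m$ by hypothesis. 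Hence $|P|\le m^{q+1}$.

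The key observation is now that every prime $p\in\pi(G)$ divides its associated Sylow subgroup $P_{p}$, so $p\le|P_{p}|\le m^{q+1}$. Consequently $|\pi(G)|\le\pi(m^{q+1})\le m^{q+1}$, and
\[
|G|=\prod_{p\in\pi(G)}|P_{p}|\le\bigl(m^{q+1}\bigr)^{m^{q+1}},
\]
which is bounded in terms of $m$ (and $q$, consistent with the convention used in the paper).

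The only subtle point is realizing that a uniform Sylow bound automatically restricts which primes can appear in $\pi(G)$; once that is in hand, both steps reduce to straightforward applications of the coprime action facts recorded in Lemma~\ref{111}.
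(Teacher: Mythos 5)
Your argument is correct as far as it goes, but it proves a weaker statement than the lemma asserts. The bound you obtain, $\bigl(m^{q+1}\bigr)^{m^{q+1}}$, depends on $q$, whereas the paper's explicit convention is that ``$m$-bounded'' means bounded in terms of $m$ \emph{only}; your parenthetical remark that a $q$-dependence is ``consistent with the convention used in the paper'' is precisely where the proof falls short. The distinction is not cosmetic: removing the dependence on $q$ is the whole point of the paper's main theorems, and the lemma is stated (and cited from \cite{ASD}) with the $q$-free bound.

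The missing ingredient is a case division on $q$ versus $m$. If $q\leq m$, then your estimate $|P|\leq m^{q+1}\leq m^{m+1}$ for each $A$-invariant Sylow subgroup is already $m$-bounded; moreover every prime $p\in\pi(G)$ divides some nontrivial factor $C_{P}(A_{i})$ of order at most $m$, so $p\leq m$ and there are at most $m$ such primes, giving $|G|\leq m^{m(m+1)}$. If $q>m$, a much stronger conclusion holds: each $C_G(A_i)$ is $A$-invariant (as $A$ is abelian) and admits a coprime action of $A/A_i\cong C_q$, whose orbits have size $1$ or $q$; since $|C_G(A_i)|\leq m<q$, all orbits are singletons, so $A$ acts trivially on $C_G(A_i)$ and hence $C_G(A_i)=C_G(A)$ for every $i$. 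By Lemma~\ref{111}(v), $G=\langle C_G(A_1),\dots,C_G(A_{q+1})\rangle=C_G(A)$, whence $|G|\leq m$. Combining the two cases yields a bound depending on $m$ alone; without this dichotomy the Sylow-by-Sylow computation, however carefully executed, cannot eliminate the exponent $q+1$.
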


We denote by $R(H)$ the soluble radical of a finite group $H$, that is, the largest normal soluble subgroup of $H$.

\begin{theorem}\cite[Theorem 2.5]{Eme11}\label{solu}
Let $q$ be a prime and $m$ a positive integer such that $m<q$. Let $A$ be an elementary abelian group of order $q^2$ acting on a finite $q'$-group $G$ in such a way that the index of $R(C_G(a))$ in $C_G(a)$ is at most $m$ for any $a\in A^{\#}$. Then $[G,A]$ is soluble.
\end{theorem}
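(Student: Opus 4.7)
The plan is to argue by minimal counterexample, reducing to a semisimple configuration where the inequality $m<q$ together with CFSG-based bounds on outer automorphism groups of finite simple groups produces a contradiction. Let $G$ be a counterexample of minimal order, so that $[G,A]$ is not soluble while the conclusion holds for every $A$-invariant section of smaller order.

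I would first carry out the standard reductions. The hypothesis passes to any $A$-invariant subgroup $H$, since $R(C_G(a))\cap H$ is a soluble normal subgroup of $C_H(a)$, and to any $A$-invariant quotient $G/N$ via Lemma \ref{111}(iv), since $R(C_G(a))N/N$ is a soluble normal subgroup of $C_{G/N}(a)=C_G(a)N/N$. A nontrivial $A$-invariant soluble normal subgroup $N$ of $G$ would, by minimality applied to $G/N$, give $[G,A]N/N$ soluble, and together with the solubility of $N$ this would make $[G,A]$ soluble, contradicting the choice of $G$. Hence $R(G)=1$, and after replacing $G$ by $[G,A]$ (using $[[G,A],A]=[G,A]$ from Lemma \ref{111}(ii)) I may assume $G=[G,A]$. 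Then $F(G)=1$, so $F^{*}(G)=E(G)=T_{1}\times\cdots\times T_{k}$ is a direct product of non-abelian simple $q'$-groups with $C_G(F^{*}(G))\le F^{*}(G)$, and $A$ permutes the $T_i$ with orbit lengths in $\{1,q,q^{2}\}$.

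The crucial step is to exhibit, in each orbit configuration, an element $a\in A^{\#}$ for which $C_{F^{*}(G)}(a)$ contains a nontrivial semisimple subgroup $S$; since $S$ meets $R(C_G(a))$ trivially, it embeds into $C_G(a)/R(C_G(a))$ and forces the latter to have order larger than $m$, a contradiction. For an orbit of length $q^{2}$, any $a\in A^{\#}$ decomposes it into $q$ cycles of length $q$ under $\langle a\rangle$, so $C_{F^{*}(G)}(a)$ contains $q$ diagonal copies of a simple $T_i$, of total order at least $60^{q}>q>m$. For an orbit of length $q$ with stabilizer $A_{1}$, either $A_{1}$ acts trivially on some $T_i$ in the orbit (in which case the entire orbit-product lies in $C_G(a')$ for every $a'\in A_{1}^{\#}$, of order at least $60^{q}$) or $A_{1}$ acts nontrivially and hence $q\mid|\mathrm{Out}(T_i)|$; invoking the CFSG-based bound $|\mathrm{Out}(T)|<|T|$ yields $|T_i|>q>m$, so the single diagonal copy of $T_i$ produced by any $a\in A\setminus A_{1}$ already violates the centralizer bound. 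Length-$1$ orbits are handled identically: $A$ cannot act trivially on every invariant simple factor, since otherwise the $q$-group $A$ would embed into the $q'$-group $F^{*}(G)$; on a factor where the action is nontrivial, the same CFSG argument together with an analysis of the kernel of $A\to\mathrm{Out}(T_i)$ yields the contradicting element.

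The main obstacle, and the reason for the numerical hypothesis $m<q$, is converting a nontrivial $q$-group action on a simple factor $T_i$ (so that $q\mid|\mathrm{Out}(T_i)|$) into the lower bound $|T_i|>q$. This relies on the CFSG-based inequality $|\mathrm{Out}(T)|<|T|$ valid for every non-abelian finite simple group $T$ (closely related to Schreier's conjecture). Once this inequality is accepted, the remainder is careful case analysis of $A$-orbits on the simple factors of $F^{*}(G)$ combined with the coprime-action facts in Lemma \ref{111}.
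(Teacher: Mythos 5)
First, a caveat: the paper does not prove this statement --- it is quoted verbatim from \cite[Theorem 2.5]{Eme11} --- so there is no in-paper proof to compare yours against. Judged on its own terms, your outline has the right skeleton (minimal counterexample, reduction to $R(G)=1$ and $G=[G,A]$, then an analysis of the $A$-orbits on the components of $F^{*}(G)=E(G)$), and the orbits of length $q$ and $q^{2}$ are handled correctly: a regular $\langle a\rangle$-orbit of components contributes a diagonal semisimple subgroup of $C_G(a)$ meeting $R(C_G(a))$ trivially, and either the bound $q\le|\mathrm{Out}(T)|<|T|$ (when the length-$q$ stabilizer acts nontrivially) or the crude estimate $60^{q}>q>m$ finishes those cases.

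The gap is in the length-one orbits, i.e.\ the $A$-invariant components $T$. Your argument produces a ``contradicting element'' only when the kernel $K$ of $A\to\mathrm{Aut}(T)$ is a proper nontrivial subgroup: then any $a\in K^{\#}$ satisfies $T\le C_G(a)$ and $|T|>|\mathrm{Out}(T)|\ge q>m$. When $K=A$ the component only yields $|T|\le m$, which is not a contradiction; the contradiction in the ``all kernels equal $A$'' subcase must instead come from $C_G(F^{*}(G))\le F^{*}(G)$ forcing $A$ to centralize all of $G$ and hence $[G,A]=1$, which is not what ``$A$ embeds into the $q'$-group $F^{*}(G)$'' says. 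More seriously, when $K=1$ you have $A\cong C_q\times C_q$ embedded in $\mathrm{Out}(T)$, no element of $A^{\#}$ centralizes $T$, and the fixed points $C_T(a)$ of an outer automorphism can a priori be soluble (e.g.\ a torus, for a diagonal automorphism), so no semisimple subgroup of $C_G(a)$ is produced by your mechanism. To close this case you must either analyse $C_T(a)$ for field and diagonal automorphisms directly, or prove that a Sylow $q$-subgroup of $\mathrm{Out}(T)$ is cyclic whenever $T$ is a simple $q'$-group (this is true, but is itself a CFSG case analysis: a diagonal or graph automorphism of order $q$ forces $q$ to divide $|T|$, leaving only the cyclic group of field automorphisms). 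As written, ``an analysis of the kernel of $A\to\mathrm{Out}(T_i)$ yields the contradicting element'' asserts the conclusion of the hardest case rather than proving it.
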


\section{Proof of Theorem  \ref{main2}}

Let $F(G)$ denote the Fitting subgroup of a group $G$. Write $F_{0}(G)=1$ and let $F_{i+1}(G)$ be the inverse image of $F(G/F_{i}(G))$ for any $i=0,1,\ldots$. If $G$ is soluble, the least number $h$ such that $F_{h}(G)=G$ is called the Fitting height $h(G)$ of $G$.

Let us now assume the hypothesis of Theorem \ref{main2}. Thus, $A$ is a finite group of prime exponent $q$ and order at least $q^3$ acting on a finite $q'$-group $G$ in such a manner that $F(C_{G}(a))$ has index at most $m$ in $C_G(a)$ for any $a\in A^{\#}$. We wish to show that $F_2(G)$ has $m$-bounded index in $G$. If $A$ contains an abelian subgroup of order $p^3$, the result is immediate from \cite{Eme11}. Therefore without loss of generality we assume that all subgroups of order $p^3$ in $A$ are non-abelian. Clearly, $A$ must contain subgroups of order $p^3$. Thus, replacing if necessary $A$ by one of its subgroups of order $p^3$ we may assume that $A$ is an extra-special group of order $p^3$. As usual, we denote by  $A'$ the commutator subgroup of $A$. Since $A$ is extra-special, $A'=Z(A)$ and $|Z(A)|=p$.

First, using Theorem \ref{afx} we prove that $F_2(G)$ has $(q,m)$-bounded index in $G$. In what follows we denote by $B$ the subgroup $Z(A)$.  Note that $B$ is contained in all subgroups of order $p^2$ of $A$. Moreover, since $B=Z(A)$ we have that all centralizers $C_G(a)$ are $B$-invariants and $C_G(B)$ is $A$-invariant.

\begin{lemma}\label{F2}
The index of $F_2(G)$ is $(q,m)$-bounded.
\end{lemma}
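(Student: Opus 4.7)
The plan is to bound $|C_{\tilde G}(B)|$ in $\tilde G := G/F(G)$ and then invoke Theorem~\ref{afx}; since $F(\tilde G) = F_2(G)/F(G)$, the identity $|G:F_2(G)| = |\tilde G:F(\tilde G)|$ will reduce the lemma to bounding $|\tilde G:F(\tilde G)|$.

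The technical core of the argument is the claim that for every maximal subgroup $M$ of $A$ we have $|C_{\tilde G}(M)|\leq m^{q^{2}-1}$. Since $A$ is extra-special of order $q^{3}$ and exponent $q$, the prime $q$ must be odd, so every maximal subgroup of $A$ is elementary abelian of order $q^{2}$ and contains $B$. Fix such an $M$. For each $a\in M^{\#}$ set $D_{a}:=C_{G}(M)\cap F(C_{G}(a))$. Since $C_{G}(M)\leq C_{G}(a)$ and $|C_{G}(a):F(C_{G}(a))|\leq m$, we have $|C_{G}(M):D_{a}|\leq m$. Lemma~\ref{P2}, applied to $M$ acting coprimely on $G$, yields $\bigcap_{a\in M^{\#}}F(C_{G}(a))\leq F(G)$; conversely $F(G)\cap C_{G}(a)$ is a nilpotent normal subgroup of $C_{G}(a)$ and therefore lies in $F(C_{G}(a))$. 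Hence $\bigcap_{a\in M^{\#}}D_{a}=C_{G}(M)\cap F(G)$, and Poincar\'e's inequality gives $|C_{G}(M):C_{G}(M)\cap F(G)|\leq m^{q^{2}-1}$, which by Lemma~\ref{111}(iv) equals $|C_{\tilde G}(M)|$.

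The remainder is routine. The quotient $A/B$ is elementary abelian of order $q^{2}$ and acts coprimely on $C_{\tilde G}(B)$; for each nontrivial coset $aB$ one has $C_{C_{\tilde G}(B)}(aB) = C_{\tilde G}(\langle a,B\rangle) = C_{\tilde G}(M)$ for the maximal subgroup $M=\langle a,B\rangle$ of $A$. The previous paragraph bounds each of these fixed-point subgroups by $m^{q^{2}-1}$, and Lemma~\ref{P1} then gives a $(q,m)$-bound on $|C_{\tilde G}(B)|$. Since a generator $b$ of $B$ acts on the $q'$-group $\tilde G$ as an automorphism of order dividing $q$ with $|C_{\tilde G}(b)| = |C_{\tilde G}(B)|$ of $(q,m)$-bounded order, Theorem~\ref{afx} produces a nilpotent subgroup $N\leq \tilde G$ of $(q,m)$-bounded index (the case when $b$ acts trivially on $\tilde G$ is immediate, for then $\tilde G$ itself is already of bounded order). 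The normal core $K:=\bigcap_{g\in\tilde G}N^{g}\leq N$ is a nilpotent normal subgroup of $\tilde G$ of $(q,m)$-bounded index, so $K\leq F(\tilde G) = F_{2}(G)/F(G)$, whence $|G:F_{2}(G)| = |\tilde G:F(\tilde G)|$ is $(q,m)$-bounded.

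The main obstacle is the first step: converting the per-element hypothesis $|C_{G}(a):F(C_{G}(a))|\leq m$ into a uniform $(q,m)$-bound on the image of $C_{G}(M)$ in $\tilde G$ for every maximal subgroup $M$ of $A$. Once this is achieved via the combined use of Lemma~\ref{P2} and the Poincar\'e estimate, the rest is a direct concatenation of Lemma~\ref{111}(iv), Lemma~\ref{P1}, and Theorem~\ref{afx}.
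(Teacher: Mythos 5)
Your proposal is correct and follows essentially the same route as the paper: for each maximal (elementary abelian, $B$-containing) subgroup $M$ of $A$ you bound the image of $C_G(M)$ in $G/F(G)$ via Lemma~\ref{P2} together with the Poincar\'e estimate on $\bigcap_{a\in M^{\#}}F(C_G(a))$, then apply Lemma~\ref{P1} to the action of $A/B$ on $C_{G/F(G)}(B)$, and finish with Theorem~\ref{afx}. The only differences are cosmetic: you make explicit the bound $m^{q^2-1}$, the identification $C_{C_{\tilde G}(B)}(aB)=C_{\tilde G}(M)$, and the normal-core step that the paper leaves implicit.
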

\begin{proof}
Let $A_1, \ldots, A_{q+1}$ be the subgroups of order $q^2$ of $A$. Note that $C_G(B)$ admits the action of the quotient-group  $\overline{A}= A/B$ which is an elementary abelian group of order $q^2$. Thus, $C_G(B)=\langle C_{C_G(B)}(\overline{a}) \ | \ \overline{a}\in \overline{A}^{\#}\rangle$. An alternative way of expressing this is to say that $C_G(B)$ is generated by the subgroups $C_G(A_i)$ (see for example \cite[Lemma 2.3]{Eme1}) . 

Now, for any subgroup $A_i$ set $C_i=\cap_{a\in A_i^{\#}}F(C_G(a))$. The subgroup $C_i$ has $(q,m)$-bounded index in $C_G(A_i)$ and by Lemma \ref{P2} we have that $C_i\leq F(G)$. Let $\overline{G}=G/F(G)$. Then the image of each subgroup $C_G(A_i)$ in $\overline{G}$  has $(q,m)$-bounded order and so using Lemma \ref{P1} we obtain that $C_{\overline{G}}(B)$ has $(q,m)$-bounded order. Therefore by Theorem \ref{afx} $F(\overline{G})$ has $(q,m)$-bounded index and the proof is complete.  
\end{proof}

The above lemma says that if $q\leq m$, then $F_2(G)$ has $m$-bounded index. We will therefore assume that $q>m$. In this case,  $B$ acts trivially on $C_G(a)/F(C_G(a))$ for any $a\in A^{\#}$ and so $[C_G(a),B]\leq F(C_G(a))$ for any $a\in A^{\#}$. 

Now our main goal is to show that $[G,B]$ is nilpotent.

\begin{lemma}\label{L1}
Let $A_i$ be a subgroup of $A$ of order $q^2$ containing $B$ and suppose that $G=[G,B]$. Then $G$ is generated by the subgroups $[C_G(a),B]$ where $a\in A_i\setminus B$.
\end{lemma}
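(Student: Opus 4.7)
The plan combines Lemma \ref{111}(v), the coprime decomposition $X=[X,B]\cdot C_X(B)$, and Lemma \ref{lz}. Since $|A_i|=q^2$ and $A$ has exponent $q$, $A_i$ is elementary abelian; apart from $B$ it has $q$ further subgroups of order $q$, say $\langle c_1\rangle,\dots,\langle c_q\rangle$, and every $a\in A_i\setminus B$ lies in a unique $\langle c_j\rangle^{\#}$ with $C_G(a)=C_G(c_j)$. Writing $L=\langle[C_G(c_j),B]:j=1,\dots,q\rangle$, it suffices to prove $L=G$.

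Lemma \ref{111}(v) applied to the noncyclic elementary abelian group $A_i$ acting coprimely on the $q'$-group $G$ yields
\[
G=\langle C_G(B),C_G(c_1),\dots,C_G(c_q)\rangle.
\]
For each $j$, coprime action of $B$ on the $B$-invariant subgroup $C_G(c_j)$ gives
$C_G(c_j)=[C_G(c_j),B]\cdot C_{C_G(c_j)}(B)=[C_G(c_j),B]\cdot C_G(A_i)$,
and $C_G(A_i)\le C_G(B)$. Substituting yields $G=\langle C_G(B),L\rangle$, so it remains to absorb $C_G(B)$ into $L$.

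To do this, let $N$ denote the normal closure of $L$ in $G$. Then $G/N$ is generated by the image of $C_G(B)$, which is centralized by $B$, so $[G/N,B]=1$. But $G=[G,B]$ forces $G/N=[G/N,B]=1$, hence $N=G$. To promote $N=G$ to $L=G$ one needs $L$ normal in $G$; $A_i$-invariance of $L$ is immediate from the abelianness of $A_i$, and for $c\in C_G(B)$ the identity $b^c=b$ (in $G\rtimes A$) yields $[C_G(c_j),B]^c=[C_G(c_j)^c,B]$, which combined with the decomposition $G=\langle C_G(B),L\rangle$ and the commutator identity $[xy,b]=[x,b]^y[y,b]$ should let one rewrite each such conjugate inside $L$. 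The main obstacle is precisely this last normality verification: controlling the conjugate centralizers $C_G(c_j)^c$ for $c\in C_G(B)$ and checking that their $B$-commutators lie in $L$ is the delicate technical point, whereas the rest of the proof is a direct application of Lemmas \ref{111}(v), \ref{lz}, and coprime action.
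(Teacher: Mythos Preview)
Your argument is correct and clean up to the point where you establish that the normal closure $N$ of $L$ equals $G$: from $G=\langle C_G(B),L\rangle$ and $[G/N,B]=1$ together with $G=[G,B]$ one indeed gets $N=G$. The gap is exactly where you say it is, and it is a real one. Conjugation by $c\in C_G(B)$ sends $[C_G(c_j),B]$ to $[C_G(c_j)^c,B]$, but $C_G(c_j)^c$ need not be any $C_G(c_k)$, nor even lie inside $\langle C_G(c_k):k\rangle$; the commutator identity $[xy,b]=[x,b]^y[y,b]$ does not help here because the factorisation you would need is of elements of $C_G(c_j)^c$, not of $c$ itself. So there is no evident mechanism forcing $[C_G(c_j),B]^c\le L$, and the passage from $N=G$ to $L=G$ is unjustified. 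Since the conclusion of the lemma is precisely $L=G$, normality of $L$ is equivalent to what you are trying to prove, so one should not expect it to fall out of formal manipulations alone.

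The paper avoids this difficulty by a reduction rather than a direct normality check. If $G$ is abelian, every subgroup is normal and your argument (or the obvious direct one) finishes at once. If $G$ is nilpotent, pass to $G/\Phi(G)$: this is abelian, still satisfies $G/\Phi(G)=[G/\Phi(G),B]$, and by coprimeness $C_{G/\Phi(G)}(c_j)$ is the image of $C_G(c_j)$; the abelian case gives $L\Phi(G)=G$, whence $L=G$. For arbitrary $G$ the paper does not argue internally at all but invokes \cite[Lemma~2.4]{Sh2}, which supplies exactly the bridge from the nilpotent case to the general one. In short, the missing ingredient in your write-up is not a routine verification but the substantive reduction step (abelian $\Rightarrow$ nilpotent via the Frattini quotient, then nilpotent $\Rightarrow$ general via the cited lemma); without it the proof is incomplete.
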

\begin{proof} Since $A_i$ is an elementary abelian group of order $p^2$ we have that $G=\langle C_G(a) \ | \ a\in A_i^{\#}\rangle$. If $G$ is abelian, the result is obvious since the subgroups $C_G(a)$ are $B$-invariants. If $G$ is nilpotent, the result can be seen by considering the action of $A$ on the abelian group $G/\Phi(G)$. Finally, the general case follows from the nilpotent case and Lemma 2.4 of \cite{Sh2}.
\end{proof}

\begin{lemma}\label{nilp}
The subgroup $[G,B]$ is nilpotent.
\end{lemma}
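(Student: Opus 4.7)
The plan is to show that $K = [G, B]$ is nilpotent. Since $K$ is $A$-invariant (as $B$ is normal in $A$) and $[K, B] = [[G,B], B] = [G, B] = K$, I would replace $G$ by $K$ and assume $G = [G, B]$, then proceed by induction on $|G|$.

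To establish solvability of $G$, fix any subgroup $A_1 \leq A$ of order $q^2$, which is elementary abelian by the exponent hypothesis. Since $q > m$, the solvable radical $R(C_G(a)) \supseteq F(C_G(a))$ has index at most $m < q$ in $C_G(a)$ for every $a \in A_1^{\#}$, so Theorem \ref{solu} yields that $[G, A_1]$ is solvable. Because $B \leq A_1$, one has $G = [G, B] \leq [G, A_1]$, so $G$ is solvable. Next, the centralizer hypothesis descends to any $A$-invariant quotient $G/N$: coprime action gives $C_{G/N}(a) = C_G(a) N/N$, and $F(C_G(a)) N/N$ is a nilpotent normal subgroup of $C_{G/N}(a)$ of index at most $m$. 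Applied to $N = F(G)$, the smaller quotient $G/F(G) = [G/F(G), B]$ inherits the hypothesis, so by induction $G/F(G)$ is nilpotent; hence $G$ has Fitting height at most $2$.

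It remains to show $G = F(G)$. By Lemma \ref{L1}, $G$ is generated by the subgroups $[C_G(a), B]$ for $a \in A_1 \setminus B$, where $A_1$ is any subgroup of order $q^2$ containing $B$; each lies in $F(C_G(a))$ since $B$ of order $q > m$ centralizes the quotient $C_G(a)/F(C_G(a))$. The main obstacle is to place each $[C_G(a), B]$ inside $F(G)$. Note that Lemma \ref{P2} cannot be applied directly to $A_1$, since one checks that $a$ centralizes $[C_G(a), B]$ but the elements of $B$ do not, so $[C_G(a), B]$ typically fails to sit in $C_G(a')$ for the other cyclic subgroups $\langle a' \rangle$ of $A_1$. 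The strategy I would use is to pass to $\overline G = G/F(G)$, which is nilpotent and satisfies $\overline G = [\overline G, B]$, and argue $\overline G = 1$ by contradiction via a chief-series analysis. Take a minimal $A$-invariant normal subgroup $M$ of $\overline G$; by nilpotency of $\overline G$, $M$ sits in $Z(\overline G)$ and is an irreducible $\mathbb{F}_r A$-module for some prime $r \neq q$. The condition $[C_{\overline G}(a), B] \leq \overline{F(C_G(a))}$, combined with Schur's lemma on the $A/B$-action on $M$, should force $B$ to act trivially on $M$; iterating this through the chief series gives $B$ trivial on $\overline G$, contradicting $\overline G = [\overline G, B] \neq 1$.
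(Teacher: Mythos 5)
Your reductions are sound and coincide with the paper's: solubility of $[G,B]$ does follow from Theorem \ref{solu} applied to a subgroup $A_1$ of order $q^2$ (using $m<q$ and $[G,B]\leq[G,A_1]$), and an induction on $|G|$ legitimately reduces to the metanilpotent case. The gap is in the final, decisive step. Once you pass to $\overline G=G/F(G)$, the hypothesis you carry along is empty: $\overline G$ is nilpotent, so $C_{\overline G}(a)$ is nilpotent and $F(C_{\overline G}(a))=C_{\overline G}(a)$, whence the condition $[C_{\overline G}(a),B]\leq F(C_{\overline G}(a))$ holds for trivial reasons and constrains nothing. Moreover the claim you hope to extract is false in the generality in which you invoke it: a nontrivial nilpotent group $\overline G$ with a coprime automorphism group $B$ of order $q$ can satisfy $\overline G=[\overline G,B]$ with $B$ acting nontrivially on every nontrivial chief factor (fixed-point-free actions do exactly this), and Schur's lemma applied to an irreducible $A$-module $M$ gives no reason for $B$ to act trivially. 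Indeed, if $B$ acted trivially on every chief factor of $\overline G$ then coprimality would give $[\overline G,B]=1$ at once, so your "should force" clause is precisely the whole theorem restated, not a step toward it.

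The information that must be exploited lives in the extension $1\to F(G)\to G\to \overline G\to 1$, not in the quotient. Writing $G=VH$ with $V$ a minimal $A$-invariant normal $p$-subgroup and $H$ an $A$-invariant Sylow $r$-subgroup with $H=[H,B]$ and $C_V(H)=1$, the hypothesis says that $E_a=[C_H(a),B]$ and $C_V(a)$ both lie in the nilpotent group $F(C_G(a))$, and coprimality then forces $E_a$ to centralize $C_V(a)$. That is a statement about how $H$ acts on $V$, which is invisible in $G/F(G)$. The paper converts it into "$H$ centralizes $V$" (contradicting $V=[V,H]$) only after extending scalars to a splitting field, decomposing a nontrivial irreducible factor into Wedderburn components via Clifford's theorem, showing $C_V(B)=1$ and running a regular-orbit argument (Lemma \ref{lemma reg orb}) to pin down a component fixed by all of $A$, analyzing the special $r$-group structure of $H$, and finally invoking Thompson's theorem on fixed-point-free automorphisms of prime order. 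None of this machinery is optional window dressing; it is exactly what replaces the unjustified sentence in your proposal.
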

\begin{proof}
We argue by contradiction. By Lemma  \ref{solu}, the subgroup $[G,B]$ is soluble. Suppose $G$ is a counterexample of minimal possible order. Let $V$ be a minimal $A$-invariant normal subgroup of $G$. Then $V$ is an elementary abelian $p$-group and $G/V$ is an $r$-group for some primes $r\neq p$. Write $G=VH$ where $H$ is an $A$-invariant Sylow $r$-subgroup such that $H=[H,B]$. By the minimality we also have $V=[V,H]$, so that $C_V(H)=1$. 

Let $A_1, \ldots, A_{q+1}$ be the subgroups of order $q^2$ containing $B$. For each $a\in A^{\#}$ we denote by $V_a$ and $H_a$ the centralizers $C_V(a)$ and $C_H(a)$ respectively. It is clear that $V_a$ is normal in $C_G(a)$. Let $E_a=[H_a,B]$. Note that $V_aE_a \leq F(C_G(a))$ since $[C_G(a),B]\leq F(C_G(a))$. By Lemma \ref{L1} we have that  $H=\langle E_x \ | \ x\in A_i\setminus B \rangle$ for any $A_i$. Then $[C_V(A_i),H]=1$ for any $A_i$. Since $C_G(B)$ is generated by the subgroups $C_G(A_i)$ we obtain that $[C_V(B),H]=1$. So by the minimality of $G$ we have that $C_V(B)=1$. Note that $E_a$  centralizes $C_V(a)$ for any $a\in A^{\#}$ but there exists $a\in A^{\#}$ such that $E_a$ acts nontrivially on $V$. Our aim is a contradiction following from these assumptions.

Now, we regard $V$ as an $\mathbb{F}_pHA$-module and extend the ground field to a finite field $k$ that is a splitting field for $HA$. We now obtain a $kHA$-module $\widetilde{V} =V\otimes_{\mathbb{F}_p}k$. Many of the above-mentioned properties of $V$ are inherited by $\widetilde{V}$. In particular, $C_{\widetilde{V}}(H)=0$ and $E_a$ centralizes $C_{\widetilde{V}}(a)$ for any $a\in A^{\#}$.

Consider an unrefinable series of $kHA$-submodules 

\begin{equation*}
\widetilde{V}=V_1>V_2>\cdots V_n>V_{n+1}=0.
\end{equation*}

Let $W$ be one of the factors of this series. It is a nontrivial irreducible $kHA$-module. If $c\in H$ acts trivially on every such $W$, then $c$ acts trivially on $\widetilde{V}$, as the order of $H$ is coprime to the characteristic $p$ of the field $k$. Therefore without loss of generality we assume that $H$ does not centralizes $W$.

By Clifford's theorem \cite[Theorem 3.4.1]{GO}, $W$ is the direct sum
$$W=W_1\oplus \cdots \oplus W_t$$
of Wedderburn components $W_i$ with respect to $H$, which are $kH$-modules transitively permuted by $A$. Furthermore, on each of the $W_i$ the center of $H$ is represented by scalar multiplication. 

We denote by $\Omega$ the set of Wedderburn components $\{W_1,\ldots,W_t\}$. Since $H$ does not centralizes $W$ we can choose a Wedderburn component $W_1$ on which $H$ acts non-trivially. Thus, in order to get a contradiction  it is sufficient to prove that $H$ acts trivially on such $W_1$. 

\begin{lemma}\label{lemma reg orb}
Let $a\in A$. The subgroup $H$ acts trivially on the sum of components in any regular $\langle a \rangle$-orbit in $\Omega$.
\end{lemma}
\begin{proof}
Let $e\in E_a$ and consider the regular $\langle a \rangle$-orbit $\{W_j, W_j^{a},\ldots,W_j^{a^{q-1}}\}$. Note that $a\not\in B$ since $C_V(B)=1$. Let $A_1=\langle a, B \rangle$. Of course $|A_1|=q^2$.

The element $e$ leaves invariant all components $W_j^{a^i}$ since $e\in H$. On the other hand, $e$ fixes every element of the form $w_j+w_j^{a}+\cdots+w_j^{a^{q-1}}$ which belongs to $C_W(a)$. Hence $e$ acts trivially on all components $W_j^{a^i}$.

Now, note that for any $x\in A_1 \setminus B$ the $\langle x \rangle$-orbit $\{W_j, W_j^{x},\ldots,W_j^{x^{q-1}}\}$ is also regular since $C_V(B)=1$ (each component $W_j$ must be $B$-invariant). Hence, any element of $E_x$ acts trivially on $W_j$. Then $H$ acts trivially on all components $W_j^{a^i}$ since $H$ is generated by the subgroups $E_x$ such that $x\in A_1\setminus B$.
\end{proof}

By the above lemma we conclude that $W_1^a=W_1$ for any $a\in A^{\#}$. Recall that $A$ is not abelian and $B=A'=Z(A)$. Let $A_1$ be a subgroup of order $q^2$ containing $B$. The centralizers $C_G(x)$ such that $x\in A_1\setminus B$ are permuted by $A$ since $C_G(x)^a=C_G(x^a)$ and $x^a=x[x,a]$. Then  $C_{W_1}(a)\neq 0$ for any $a\in A_1\setminus B$ since $W_1=\langle C_{W_1}(x) \ | \ x\in A_1\setminus B\rangle$ and the subgroups $C_{W_1}(x)$ are permuted by $A$.

By the minimality we have that $[N,B]=1$ for any proper normal subgroup of $H$. Then $H$ is a special $r$-group, that is, $H$ is an elementary abelian group or has nilpotency class 2 and $H'=Z(H)=\Phi(H)$ is elementary abelian.

In order to prove that $H$ acts trivially on $W_1$ it suffices to prove that each subgroup $E_a$ acts trivially on $W_1$. Let $a\in A^{\#}$ such that $E_a$ does not act trivially on $W_1$. If $E_a$ is not abelian, then $E_a^{'} \leq Z(H)$ and acts trivially on $C_{W_1}(a)$. Therefore, $E_a^{'}$ acts trivially on $W_1$ since $Z(H)$ acts by scalar multiplication on $W_1$. In fact, if $E_a$ is not abelian, then $E_x$ is not abelian for any $x\in \langle a, B\rangle\setminus B$ since such subgroups are permuted by $A$. Moreover, since $C_{W_1}(x)\neq 0$ for any $x\in\langle a, B\rangle\setminus B$  we have that $Z=\langle E_x^{'} \ | \ x\in \langle a, B\rangle\setminus B \rangle $ acts trivially on $W_1$. Since $Z$ is a normal $A$-invariant subgroup of $H$, we can factor out $Z$ and assume that $E_a$ is abelian. But in this case $B$ acts fixed point freely on $W_1E_a$ and then $W_1E_a$ is a nilpotent group by Thompson's theorem \cite{T} which means that $E_a$ acts trivially on $W_1$ that is a contraction and the proof is complete.
\end{proof}

We can now easily complete the proof of Theorem \ref{main2}. By the above lemma $B$ acts trivially on the quotient $G/F(G)$. Therefore $G=F(G)C_G(B)$.
This shows that $F(C_G(B))\leq F_2(G)$. Since the index of $F(C_G(B))$ in $C_G(B)$ is at most $m$, the result follows.

\section{Proof of Theorem \ref{main1}}

We say that a finite group $G$ is metanilpotent if $\gamma_{\infty}(G)\leq F(G)$. The following elementary lemma will be useful (for the proof see for example \cite[Lemma 2.4]{AST}).

\begin{lemma}\label{I3} Let $G$ be a metanilpotent finite group. Let $P$ be a Sylow $p$-subgroup of $\gamma_{\infty} (G)$ and $H$ be a Hall $p'$-subgroup of G. Then $P=[P,H]$.
\end{lemma}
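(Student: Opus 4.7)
The plan is to work with the elementary abelian quotient $\tilde{P}=P/\Phi(P)$ viewed as an $\mathbb{F}_pG$-module and derive $P=[P,H]$ from a module-theoretic contradiction. First I would set up some basic normality. Since $G$ is metanilpotent, $\gamma_{\infty}(G)\leq F(G)$ is nilpotent, so $P=O_p(\gamma_{\infty}(G))$ is characteristic in $\gamma_{\infty}(G)$ and hence normal in $G$. If $Q$ denotes the Hall $p'$-subgroup of $\gamma_{\infty}(G)$, then $Q$ is a normal $p'$-subgroup of $G$, so $Q\leq H$ and $H\gamma_{\infty}(G)=HP$. The image of $HP$ in the nilpotent quotient $G/\gamma_{\infty}(G)$ is its Hall $p'$-subgroup, hence a direct factor of a nilpotent group and in particular normal; consequently $HP\triangleleft G$.

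Next I would pass to $\tilde{P}=P/\Phi(P)$. Because $[P,P]\leq\Phi(P)$, the subgroup $P$ acts trivially on $\tilde{P}$, so $[\tilde{P},HP]=[\tilde{P},H]$ and $C_{\tilde{P}}(HP)=C_{\tilde{P}}(H)$. The coprime action of $H$ on the $\mathbb{F}_p$-space $\tilde{P}$ gives a Maschke decomposition $\tilde{P}=[\tilde{P},H]\oplus C_{\tilde{P}}(H)$, and because $HP\triangleleft G$ both summands are in fact $G$-submodules of $\tilde{P}$; this upgrade of the splitting from $\mathbb{F}_pH$- to $\mathbb{F}_pG$-modules is the principal subtlety of the proof, and it hinges on the triviality of the $P$-action on $\tilde{P}$ together with the normality of $HP$ in $G$. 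On the other hand, the lower central series of any finite group stabilises, so $\gamma_{\infty}(G)=[\gamma_{\infty}(G),G]$; combining this with $\gamma_{\infty}(G)=P\times Q$ and taking the $p$-part gives $P=[P,G]$, and passing to the quotient yields $\tilde{P}=[\tilde{P},G]$. Distributing the commutator across the direct-sum decomposition and using $G$-invariance of each summand forces $[C_{\tilde{P}}(H),G]=C_{\tilde{P}}(H)$.

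Finally I would turn this equality into a contradiction unless $C_{\tilde{P}}(H)=0$. The $G$-action on $C_{\tilde{P}}(H)$ has $HP$ in its kernel, and, by Hall's theorem in the solvable group $G$, $G=S_0H$ for any Sylow $p$-subgroup $S_0$ containing $P$, whence $G/HP\cong S_0/P$ is a $p$-group; thus the action factors through some $p$-group $K$. But every nonzero $\mathbb{F}_pK$-module $V$ satisfies $[V,K]<V$, because the augmentation ideal of $\mathbb{F}_pK$ is nilpotent so the coinvariants $V_K=V/[V,K]$ cannot vanish. Therefore $C_{\tilde{P}}(H)=0$, so $\tilde{P}=[\tilde{P},H]$, i.e.\ $P=[P,H]\Phi(P)$, and the non-generating property of $\Phi(P)$ yields $P=[P,H]$.
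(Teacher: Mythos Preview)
Your argument is correct. The normality of $HP$ in $G$, the passage to the Frattini quotient $\tilde P$, the upgrade of the coprime Fitting decomposition $\tilde P=[\tilde P,H]\oplus C_{\tilde P}(H)$ to a $G$-module decomposition via $[\tilde P,HP]=[\tilde P,H]$ and $C_{\tilde P}(HP)=C_{\tilde P}(H)$, and the final contradiction using that $G/HP$ is a $p$-group acting on the $\mathbb{F}_p$-space $C_{\tilde P}(H)$ all check out. One cosmetic remark: the identity $G=S_0H$ for a Sylow $p$-subgroup $S_0$ follows from a pure order count (since $|S_0||H|=|G|$ and $S_0\cap H=1$), so invoking Hall's theorem there is unnecessary; likewise, $P$ is normal and hence lies in every Sylow $p$-subgroup automatically.

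As for comparison with the paper: the paper does not actually prove this lemma at all. It simply records the statement and refers the reader to \cite[Lemma~2.4]{AST} for a proof. Your write-up therefore supplies a complete, self-contained proof where the paper gives none. The usual short proofs of this fact (for instance the one in the cited reference) tend to argue more directly with the coprime-action identity $P=[P,H]C_P(H)$ and an analysis of $C_P(H)$ modulo a suitable normal subgroup, rather than through the module-theoretic Frattini-quotient route you chose; your approach is a bit longer but perfectly sound, and it has the virtue of making the key structural point---that the obstruction $C_{\tilde P}(H)$ is a $G$-submodule on which only a $p$-group acts---completely explicit.
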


Let us now assume the hypothesis of Theorem \ref{main1}. Thus, $A$ is a finite group of prime exponent $q$ and order at least $q^3$ acting on a finite $q'$-group $G$ in such a manner that $\gamma_{\infty} (C_{G}(a))$ has order at most $m$ for any $a\in A^{\#}$. We wish to show that $\gamma_{\infty} (G)$ has $m$-bounded order. It is clear that $A$ contains a subgroup of order $q^3$. As in the previous section, we may assume that $A$ is an extra-special group of order $p^3$ and we denote by $B$ the subgroup $A'=Z(A)$.  Since $\gamma_{\infty}(C_{G}(a))$ has order at most $m$, we obtain that $F(C_G(a))$ has index at most $m!$ (see for example \cite[2.4.5]{khukhro}). By \cite[Theorem 1.1]{Eme2} $\gamma_{\infty} (G)$ has $(q,m)$-bounded order. Without loss of generality we will assume that $m!<q$. In particular, $[G,B]$ is nilpotent by Lemma \ref{nilp}.

\begin{lemma}\label{gam}
If $G$ is soluble, then $\gamma_{\infty}(G)=\gamma_{\infty}(C_G(B)) $.
\end{lemma}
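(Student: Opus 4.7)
\smallskip
\textbf{Proof plan for Lemma \ref{gam}.}

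The containment $\gamma_\infty(C_G(B)) \leq \gamma_\infty(G)$ holds automatically, since $\gamma_n(H) \leq \gamma_n(K)$ whenever $H \leq K$. The task is to prove the reverse inclusion. My starting point is the relation $G = F(G) C_G(B)$ established in the preceding paragraph (a consequence of $[G,B] \leq F(G)$, which comes from Lemma \ref{nilp}). This identifies $G/F(G)$ with a quotient of $C_G(B)$, and passing to nilpotent residuals yields $\gamma_\infty(G) F(G) = \gamma_\infty(C_G(B)) F(G)$. Dedekind's modular law then gives
\[
\gamma_\infty(G) \;=\; \gamma_\infty(C_G(B)) \cdot \bigl(\gamma_\infty(G) \cap F(G)\bigr).
\]
Setting $D = \gamma_\infty(G) \cap F(G)$, which is a nilpotent $A$-invariant normal subgroup of $G$, it therefore suffices to show $D \leq \gamma_\infty(C_G(B))$.

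I proceed by induction on $|G|$. The hypotheses pass to any $A$-invariant quotient, so for a minimal $A$-invariant normal subgroup $N$ of $G$ (an elementary abelian $p$-group with $p \ne q$, contained in $F(G)$), the inductive hypothesis applied to $G/N$ gives $\gamma_\infty(G/N) = \gamma_\infty(C_{G/N}(B))$, which rewrites as $\gamma_\infty(G) N = \gamma_\infty(C_G(B)) N$. If some choice of $N$ satisfies $N \not\leq \gamma_\infty(G)$, then by minimality $N \cap \gamma_\infty(G) = 1$, and the modular law forces $\gamma_\infty(G) = \gamma_\infty(C_G(B))$ directly. Thus I may assume in what follows that every minimal $A$-invariant normal subgroup of $G$ lies in $D$.

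Fix such an $N \leq D$. Coprime action of $B$ on the abelian group $N$ produces $N = C_N(B) \oplus [N,B]$, and minimality of $N$ as an $A$-invariant $G$-normal subgroup forces one summand to be trivial. In the first case, $N \leq C_G(B)$, and Lemma \ref{lz} applied to $N \triangleleft G$ with $N \leq C_G(B)$ gives $[[G,B], N] = 1$; so $N$ centralizes the nilpotent normal subgroup $[G,B]$, and exploiting the action of the elementary abelian quotient $A/B$ on $C_G(B)$ (each of whose fixed-point subgroups $C_G(\langle B,a\rangle)$ has nilpotent residual of order at most $m$) one concludes $N \leq \gamma_\infty(C_G(B))$, contradicting minimality of the counterexample. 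In the second case, $N = [N,B] \leq [G,B]$ and $B$ acts fixed-point-freely on $N$; here $N \cap C_G(B) = 0$, so $N \not\leq \gamma_\infty(C_G(B))$, and one must derive a contradiction directly. The argument mirrors the Clifford-theoretic analysis already developed in Lemma \ref{nilp}: view $N$ as an $\mathbb{F}_p A$-module, extend scalars to a splitting field, decompose into Wedderburn components, and use the extra-special structure of $A$ together with the bound $|\gamma_\infty(C_G(a))| \leq m$ to rule out this configuration.

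The main obstacle is precisely the last step, in particular Case~B, where one must show that no minimal $A$-invariant normal subgroup of $G$ satisfying $N=[N,B]$ can sit inside $\gamma_\infty(G)$ under the given hypotheses. I expect this to be handled by a Clifford-theoretic reduction parallel to the one in Lemma \ref{nilp}, combined with the fact that $B=Z(A)=A'$ acts nontrivially on $N$ while every cyclic subgroup $\langle a\rangle$ with $a\in A\setminus B$ has small $\gamma_\infty(C_G(a))$. Once both cases are dispatched, the induction closes and the lemma follows.
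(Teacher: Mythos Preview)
Your overall strategy is genuinely different from the paper's, and it is not complete: you explicitly flag Case~B as ``the main obstacle'' and only sketch an intention to imitate the Clifford-theoretic argument of Lemma~\ref{nilp}. That imitation will not work as stated. In Lemma~\ref{nilp} the Clifford analysis was driven by the condition $[C_G(a),B]\leq F(C_G(a))$, and its conclusion was precisely that $[G,B]$ is nilpotent; you already have that fact available, so rerunning the same machinery on a minimal $N=[N,B]$ inside $\gamma_\infty(G)$ yields nothing new. What actually rules out Case~B is the stronger statement $[\gamma_\infty(G),B]=1$, and that is exactly what the paper proves head-on; your plan defers this to a step you have not supplied. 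Case~A is also not finished: from $N\leq C_G(B)$ and $[[G,B],N]=1$ you still need an argument that forces $N$ into $\gamma_\infty(C_G(B))$, and the phrase ``exploiting the action of $A/B$'' does not indicate how the bound $|\gamma_\infty(C_G(a))|\leq m$ enters.

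The paper takes a different and shorter route. It inducts on the Fitting height of $G$, reducing to the metanilpotent case. There one fixes a Sylow $p$-subgroup $P$ of $\gamma_\infty(G)$ and an $A$-invariant Hall $p'$-subgroup $H$ with $P=[P,H]$, and shows directly that $P=[C_P(B),C_H(B)]\leq\gamma_\infty(C_G(B))$. The crucial point---absent from your plan---is that since $q>m\geq|\gamma_\infty(C_G(a))|$, every subgroup $A_i$ of order $q^2$ acts trivially on $\gamma_\infty(C_G(a))=[C_P(a),C_H(a)]$ for $a\in A_i^{\#}$, so these commutators all lie in $C_P(A_i)\leq C_P(B)$. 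A short normality argument (first for $P$ abelian, then via $P/\Phi(P)$) then gives $P=C_P(B)$, after which the three-subgroup lemma yields $[[H,B],P]=1$ and hence $P=[C_P(B),C_H(B)]$. This use of the inequality $q>m$ to force $A_i$-triviality of the local residuals is the engine of the proof; your Clifford sketch does not invoke it, and without it Case~B cannot be closed.
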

\begin{proof}
We will use induction on the Fitting height $h$ of $G$. Suppose first that $G$ is metanilpotent. Let $P$ be a Sylow $p$-subgroup of $\gamma_{\infty}(G)$ and $H$ a Hall $A$-invariant $p'$-subgroup of $G$. By Lemma \ref{I3} we have  that $P=[P,H]$. It is sufficient to show that $P\leq \gamma_{\infty}(C_G(B))$. Therefore, without loss of generality, we assume that $G=PH$. With this in mind, observe that $\gamma_{\infty}(C_G(a))=[C_P(a),C_H(a)]$ for any $a\in A^{\#}$.

We will prove that $P=[C_P(B),C_H(B)]$. Let $A_1, \ldots, A_{q+1}$ be the subgroups of order $q^2$ of $A$. Note that $A_i$ acts trivially on $\gamma_{\infty}(C_G(a))$  for any $a\in A_{i}^{\#}$ since $q>m$. Hence $\gamma_{\infty}(C_G(a))\leq C_P(A_i)$ for any $a\in A_{i}^{\#}$. In particular, $ \gamma_{\infty}(C_G(B))\leq C_P(A)$ since $\gamma_{\infty}(C_G(B))$ is $A$-invariant.

First, assume that $P$ is abelian. Given $a,b\in A_i$ we have that  $[[C_P(a),C_H(a)], C_H(b)]\leq [C_P(A_i),C_H(b)]\leq [C_P(b),C_H(b)]$.  On the other hand, $H=\prod_{b\in A_i^{\#}} C_H(b)$. Therefore, the subgroup $N= \prod_{a\in A^{\#}}[C_P(a),C_H(a)]$ is a normal subgroup. Since $N$ is $A$-invariant, we obtain that $A$ acts on $G/N$ in such a way that $C_G(a)$ is nilpotent for any $a\in A^{\#}$. Thus $G/N$ is nilpotent by \cite[Lemma 2.5]{Eme2}. Therefore, $P=\prod_{a\in A^{\#}}[C_P(a),C_H(a)]$. In particular, $P=C_P(B)$.

Assume that $P$ is not abelian. Consider the action of $B$ on $P/\Phi(P)$. By the above, $P/\Phi(P)=C_P(B)\Phi(P)/\Phi(P)$. We see that $P=C_P(B)\Phi(P)$, which implies that $P=C_P(B)$.

Since $P=C_P(B)$ is a normal subgroup of $G$, we have that $[B,P,H]=[P,H,B]=1$. Then $[[H,B],P]=1$. Therefore, $P=[C_P(B),C_H(B)]$ since $H=C_H(B)[H,B]$.

If $G$ is soluble and its Fitting height is $h>2$, then we consider the quotient group $G/\gamma_{\infty} (F_2(G))$ which has Fitting height $h-1$ and $\gamma_{\infty} (F_2(G))\leq \gamma_{\infty}(G)$. Hence, by induction we have that $\gamma_{\infty}(G)=\gamma_{\infty}(C_G(B))$.

\end{proof}

Recall that under our assumptions $[G,B]$ is nilpotent and $C_G(B)$ has a normal nilpotent subgroup of index at most $m!$. Let $R$ be the soluble radical of $G$. Since $G=[G,B]C_G(B)$, the index of $R$ in $G$ is at most $m!$. Lemma \ref{gam} shows that the order of $\gamma_{\infty}(R)$ is at most $m$. We pass to the quotient $G/\gamma_{\infty}(R)$ and without loss of generality assume that $R$ is nilpotent. If $G=R$, we have nothing to prove. Therefore assume that $R<G$ and use induction on the index of $R$ in $G$. Since $[G,B]\leq R$, it follows that each subgroup of $G$ containing $R$ is $A$-invariant. If $T$ is any proper normal subgroup of $G$ containing $R$, by induction the order of $\gamma_{\infty}(T)$ is $m$-bounded and the theorem follows. Hence, we can assume that $G/R$ is a nonabelian simple group. We know that $G/R$ is isomorphic to a quotient of $C_G(B)$ and so, being simple, $G/R$ has order at most $m$.

As usual, given a set of primes $\pi$, we write $O_\pi(U)$ to denote the maximal normal $\pi$-subgroup of a finite group $U$. Let $\pi=\pi(m!)$ be the set of primes at most $m$. Let $N=O_{\pi'}(G)$. Our assumptions imply that $G/N$ is a $\pi$-group and $N\leq F(G)$. Thus,  by the Schur-Zassenhaus theorem \cite[Theorem 6.2.1]{GO} the group $G$ has an $A$-invariant $\pi$-subgroup $K$ such that $G=NK$. Let $K_0=O_\pi(G)$.

Suppose that $K_0=1$. Then $G$ is a semidirect product of $N$ by $K=C_K(A)$. For an automorphism $a\in A^\#$ observe that $[C_N(a),K]\leq\gamma_\infty(C_G(a))$ since $C_N(a)$ and $K$ have coprime order. On the one hand, being a subgroup of $\gamma_\infty(C_G(a))$, the subgroup $[C_N(a),K]$ must be a $\pi$-group. On the one hand, being a subgroup of $N$, the subgroup $[C_N(a),K]$ must be a $\pi'$-group. We conclude that $[C_N(a),K]=1$ for each $a\in A^\#$. Since $N$ is a product of all such centralizers $C_N(a)$, it follows that $[N,K]=1$. Since $K_0=1$ and $K$ is a $\pi$-group, we deduce  that $K=1$ and so $G=N$ is a nilpotent group.

In general $K_0$ does not have to be trivial. However considering the quotient $G/K_0$ and taking into account the above paragraph we deduce that $G=N\times K$. In particular, $\gamma_\infty(G)=\gamma_\infty(K)$ and so without loss of generality we can assume that $G$ is a $\pi$-group. It follows that the number of prime divisors of $|R|$ is $m$-bounded and we can use induction on this number. It will be convenient to prove our theorem first under the additional assumption that $G=G'$.

Suppose that $R$ is an $p$-group for some prime $p\in\pi$. Note that if $s$ is a prime different from $p$ and $H$ is an $A$-invariant Sylow $s$-subgroup of $G$, then in view of Lemma \ref{gam} we have $\gamma_{\infty}(RH)\leq\gamma_{\infty}(C_G(B))$ because $RH$ is soluble. We will require the following observation about finite simple groups (for the proof see for example \cite[Lemma 3.2]{Eme2}).

\begin{lemma}\label{uu}
Let $D$ be a nonabelian finite simple group and $p$ a prime. There exists a prime $s$ different from $p$ such that $D$ is generated by two Sylow $s$-subgroup.
\end{lemma}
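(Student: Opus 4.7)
The plan is to invoke the Classification of Finite Simple Groups together with the following general fact: for every nonabelian finite simple group $D$ and every prime $t$ dividing $|D|$, $D$ is generated by two (necessarily conjugate) Sylow $t$-subgroups. Granted this, the lemma follows immediately, because the order of a nonabelian simple group is divisible by at least two distinct primes; one picks any prime divisor $s$ of $|D|$ with $s\neq p$, and then $D$ is the join of two Sylow $s$-subgroups.

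The key generation fact itself is verified by running through CFSG family by family. For a group $D$ of Lie type in defining characteristic $r$, the case $t=r$ is handled directly by taking the unipotent radicals $U$ and $U^-$ of a pair of opposite Borel subgroups: $U$ is a Sylow $r$-subgroup of $D$, and by the Bruhat decomposition together with the commutator relations between opposite root subgroups, $\langle U, U^-\rangle$ contains a maximal torus and every root subgroup, hence equals $D$. For $t\neq r$ in the Lie type setting, one exhibits two conjugate maximal tori whose orders are divisible by a suitably chosen Zsigmondy prime $t$ and shows that these tori generate $D$. For the alternating groups $A_n$ with $n\geq 5$ the assertion follows from explicit constructions of $t$-cycles (or products of $t$-cycles) generating $A_n$, and the $26$ sporadic groups are settled by direct inspection of the ATLAS.

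The main obstacle is the cross-characteristic Lie type case: the choice of two Sylow $t$-subgroups whose join is all of $D$ must respect both the root datum and the arithmetic of the field of definition, and a uniform argument requires importing standard results on generation of classical and exceptional groups by semisimple elements. By contrast, every other case is comparatively elementary, and for the purposes of the present paper it would actually suffice to produce a single prime $s\neq p$ with the desired property, which gives considerable flexibility to tailor the argument to whichever family $D$ belongs and to reduce reliance on the more delicate cross-characteristic generation results.
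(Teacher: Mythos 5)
The paper itself does not prove this lemma: it is quoted with an explicit pointer to \cite[Lemma 3.2]{Eme2}, so the intended ``proof'' here is a citation. You have instead undertaken to prove it from scratch, and what you have written is a plan rather than a proof; the gap sits exactly where you acknowledge it, and your sketch of that case contains a misstep. You reduce the lemma to the stronger assertion that \emph{every} prime divisor $t$ of $|D|$ admits two Sylow $t$-subgroups generating $D$, and then outline a CFSG case analysis. The defining-characteristic step is sound: for a group of Lie type in characteristic $r$, $\langle U,U^-\rangle=D$ for two opposite Sylow $r$-subgroups, and this settles all Lie-type groups whenever $p\neq r$. But when $p$ \emph{is} the defining characteristic you are forced into the cross-characteristic case, and there the proposed mechanism does not work as stated: for a cross-characteristic prime $t$ a Sylow $t$-subgroup of $D$ is in general not contained in any maximal torus (for $t=2,3$, or whenever the Sylow $t$-subgroup is nonabelian, it lies at best in a torus normalizer or something larger); Zsigmondy primes fail to exist for several parameter values; and even granting that two conjugate maximal tori generate $D$ --- itself a nontrivial theorem --- that yields generation by two tori, not by two Sylow $t$-subgroups. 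Deferring all of this to unnamed ``standard results on generation by semisimple elements'' is precisely the missing content. The alternating case has a similar flaw: two $t$-cycles move at most $2t$ points and cannot generate $A_n$ once $n>2t$, so you need either two order-$t$ elements of large support or a direct argument with the (much larger) Sylow $t$-subgroups, and neither is supplied.

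Note also that the lemma only asks for \emph{one} prime $s\neq p$ that works, which is the weaker statement that \cite[Lemma 3.2]{Eme2} actually establishes and the paper relies on. The honest options are to cite that result, as the paper does, or to carry out the cross-characteristic generation argument in full --- essentially the content of the literature on generation by elements of primitive prime divisor order --- which your proposal does not do.
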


In view of Lemma \ref{uu} and the fact that $G/R$ is simple we deduce that $G/R$ is generated by the image of two Sylow $s$-subgroup $H_1$ and $H_2$ where $s$ is a prime different from $p$. Both subgroups $RH_1$ and $RH_2$ are soluble and $B$-invariant since $[G,B]\leq R$. Therefore both $[R,H_1]$ and $[R,H_2]$ are contained in $\gamma_{\infty }(C_G(B))$. 

Let $H=\langle H_1,H_2\rangle$. Thus $G=RH$. Since $G=G'$, it is clear that $G=[R,H]H$ and $[R,G]=[R,H]$. We have $[R,H]=[R,H_1][R,H_2]$ and therefore the order of $[R,H]$ is $m$-bounded. Passing to the quotient $G/[R,G]$ we can assume that $R=Z(G)$. So we are in the situation where $G/Z(G)$ has order at most $m$. By a theorem of Schur the order of $G'$ is $m$-bounded as well (see for example \cite[2.4.1]{khukhro}). Taking into account that $G=G'$ we conclude that the order of $G$ is $m$-bounded.

Suppose now that $\pi(R)=\{p_1,\dots,p_t\}$, where $t\geq2$. For each $i=1,\dots,t$ consider the quotient $G/O_{p_i'}(G)$. The above paragraph shows that the order of $G/O_{p_i'}(G)$ is $m$-bounded. Since also $t$ is $m$-bounded, the result follows.

Thus, in the case where $G=G'$ the theorem is proved. Let us now deal with the case where $G\neq G'$. Let $G^{(l)}$ be the last term of the derived series of $G$. The previous paragraph shows that $|G^{(l)}|$ is $m$-bounded. Consequently, $|\gamma_{\infty}(G)|$ is $m$-bounded since $G/G^{(l)}$ is soluble and $G^{(l)}\leq\gamma_{\infty}(G)$. The proof is now complete.

\baselineskip 11 pt

\end{document}